\documentclass[leqno]{article}
\usepackage{geometry}
\usepackage{graphicx}
\usepackage[T2A]{fontenc}
\usepackage[utf8]{inputenc}
\usepackage[english]{babel}
\usepackage{hyphenat}
\usepackage{mathtools}
\usepackage{amsfonts,amssymb,mathrsfs,amscd,amsmath,amsthm}
\usepackage{indentfirst}
\usepackage{verbatim}
\usepackage{url}
\usepackage{hyperref}
\usepackage{stmaryrd}
\usepackage{alltt}
\graphicspath{{pictures/}}
\DeclareGraphicsExtensions{.pdf,.png,.jpg}
\usepackage{caption}
\usepackage{wrapfig}
\usepackage{tikz}

\theoremstyle{plain}

\newtheorem{theorem}{Theorem}[section]

\newtheorem{lem}[theorem]{Lemma}

\theoremstyle{definition}

\newtheorem{defin}[theorem]{Definition}

\title{A problem of intersection of balls in normed space}
\author{Daniil Iliukhin}
\date{}

\begin{document}

\maketitle

\begin{abstract}
This paper investigates the topological properties of intersections of balls in finite-dimensional normed spaces---a problem that naturally arises when constructing covers for estimating the Gromov--Hausdorff distance. We study the topology of a set obtained by removing a large closed ball from a finite intersection of small open balls. It is proved that in an arbitrary normed plane, such a set is always contractible, provided that it is non-empty.
\end{abstract}

\section{Introduction}

In modern metric geometry, one of the important problems is the study of the Gromov--Hausdorff distance between various metric spaces, in particular, between normed spaces and their subsets (\cite{Tuzhilin}, \cite{GH}). Computing the exact values of this distance is a difficult computational problem; therefore, there is an ongoing search for non-trivial estimates relating the Gromov--Hausdorff distance to simpler metric characteristics, such as the Hausdorff distance (\cite{adams}).

An effective method for obtaining estimates for the Gromov--Hausdorff distance is the application of algebraic topology tools. In particular, simplicial complexes and the classical Nerve theorem are used. The Nerve theorem states that if a family of open sets forms a so-called "good cover"---that is, all non-empty finite intersections of the cover elements are contractible sets---then the nerve of this cover is homotopy equivalent to the union of these sets (\cite{hatcher}).

In finite-dimensional normed spaces, open balls are convex sets. Since the intersection of any family of convex sets is also convex, it is automatically contractible (or empty). This makes collections of open balls a natural and convenient tool for constructing good covers. However, when proving finer estimates for the Gromov--Hausdorff distance, a technical need arises to expand the covers by adding external regions of the space that are homeomorphic to the complements of balls of a sufficiently large radius.

The complement of a closed ball is not a convex set. In this regard, a non-trivial topological problem arises: is contractibility preserved when intersecting a finite number of open balls with the complement of a large closed ball? In the general case, the intersection of a convex set with the exterior of a ball can consist of several connected components or have a complicated topology. Nevertheless, it turns out that if the radii of the initial intersected balls are strictly less than the radius of the subtracted closed ball, the situation changes fundamentally.

The purpose of this paper is to investigate the topological properties of the difference of sets consisting of balls in a finite-dimensional normed space. The main result of the paper is the proof that if, in a normed plane, the intersection of several open balls of a small radius is not entirely contained in a closed ball of a larger radius, then their difference (that is, the part of the intersection of the small balls lying strictly outside the large ball) always forms a contractible set.

This result has an independent geometric nature, describing the structure of the intersection of convex bodies of a special type.

\section{Normed spaces and contractibility}

\begin{defin}[\cite{burago}]
A \textbf{normed space} is a pair $(X, \|\cdot\|)$ consisting of a vector space $X$ over the field of real or complex numbers and a mapping $\|\cdot\| \colon X \to \mathbb{R}$ such that the following properties hold for any $x, y \in X$ and scalar $\lambda$:
\begin{itemize}
    \item $\|x\| \geqslant 0$, $\|x\| = 0 \Rightarrow x = 0$ (positive definiteness);
    \item $\|\lambda x\| = |\lambda| \cdot \|x\|$ (absolute homogeneity);
    \item $\|x + y\| \leqslant \|x\| + \|y\|$ (triangle inequality).
\end{itemize}
\end{defin}

\begin{defin}[\cite{burago}]
A \textbf{homotopy} is a family of continuous mappings $F_t \colon X \to Y$, $t \in [0, 1]$, that depend continuously on the parameter $t$. More formally, it is a continuous mapping $F \colon X \times [0, 1] \to Y$. 

Mappings $f, g \colon X \to Y$ are called \textbf{homotopic} ($f \sim g$) if there exists a homotopy $F_t$ such that $F_0 = f$ and $F_1 = g$.

A \textbf{homotopy equivalence} between topological spaces $X$ and $Y$ is a pair of continuous mappings $f \colon X \to Y$ and $g \colon Y \to X$ such that the compositions $f \circ g$ and $g \circ f$ are homotopic to the identity mappings $\operatorname{id}_Y$ and $\operatorname{id}_X$, respectively ($f \circ g \sim \operatorname{id}_Y$ and $g \circ f \sim \operatorname{id}_X$). In this case, the spaces $X$ and $Y$ are said to be homotopy equivalent (or to have the same homotopy type).

A topological space (or set) $X$ is called \textbf{contractible} if its identity mapping $\operatorname{id}_X$ is homotopic to a mapping to some point $x_0 \in X$. In other words, the space $X$ is homotopy equivalent to a point.
\end{defin}

\section{A problem of intersection of balls in normed space}

\begin{lem}
In a normed space of finite dimension $n$, let $B=\{x\;:\;\|x-x_B\|\leq r_B\}$ be a closed ball and $A=\{x\;:\;\|x-x_A\|< r_A\}$ be an open ball, with $r_A<r_B$. If the set $A \setminus B$ is non-empty, then it is contractible.
\end{lem}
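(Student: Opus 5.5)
The plan is to fibre $A\setminus B$ over directions seen from the centre $x_B$, reduce contractibility to that of the set of \emph{admissible directions}, and then prove that this set is contractible. First, if $x_A=x_B$ then $A\subset B$ because $r_A<r_B$, so $A\setminus B=\emptyset$. Hence we may assume $x_A\neq x_B$ and put $e=(x_A-x_B)/\|x_A-x_B\|$. Let $S=\{u:\|u\|=1\}$ be the unit sphere. For $u\in S$ consider the ray $x_B+tu$, $t>r_B$; such a point lies outside $B$ exactly when $t>r_B$. The function $t\mapsto\|x_B+tu-x_A\|$ is convex, so
\[
I(u)=\{t>r_B : \|x_B+tu-x_A\|<r_A\}
\]
is an open interval, possibly empty. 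Let $U=\{u\in S: I(u)\neq\emptyset\}$, and write $I(u)=(a(u),b(u))$.

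Step one is to show that $U$ is open in $S$ and that $a,b$ are continuous on $U$. Both follow from the continuity of the norm: $A$ is open, and the endpoints are roots of a continuous convex function, or equal $r_B$, varying continuously with $u$. Given this, the map
\[
(u,s)\mapsto x_B+\bigl((1-s)a(u)+s\,b(u)\bigr)u
\]
is a homeomorphism $U\times(0,1)\to A\setminus B$, with inverse obtained by radial projection from $x_B$. Hence $A\setminus B$ is homotopy equivalent to $U$, and it suffices to prove that $U$ is contractible.

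Step two is to prove that $U$ is star-shaped about $e$ along the planar arcs
\[
u_\lambda=\frac{(1-\lambda)u+\lambda e}{\|(1-\lambda)u+\lambda e\|},\qquad \lambda\in[0,1].
\]
Concretely, we must show that $u\in U$ implies $u_\lambda\in U$ for all $\lambda$. Note that $e\in U$: the point $x_A+\tfrac{r_A}{2}e=x_B+\bigl(\|x_A-x_B\|+\tfrac{r_A}{2}\bigr)e$ lies in $A$, and we must check that it lies outside $B$. This is not automatic. What is true is that the farthest point of $A$ from $x_B$ in direction $e$ has distance $\|x_A-x_B\|+r_A$, whereas every other point of $A$ has distance $<\|x_A-x_B\|+r_A$ by the triangle inequality. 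So if $A\setminus B\neq\emptyset$, the ray along $e$ does reach outside $B$, and $e\in U$.

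The map $(u,\lambda)\mapsto u_\lambda$ is well defined whenever $u\neq -e$, and $-e\notin U$. Indeed, points $x_B-te$ with $t>0$ satisfy $\|x_B-te-x_A\|=t+\|x_A-x_B\|$, which exceeds $r_A$ as soon as $t>r_B>r_A$. So, once star-shapedness holds, this map is a contraction of $U$ onto $e$, and the lemma follows.

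The main obstacle is the star-shapedness itself. Take $x=x_B+tu\in A\setminus B$ and the point $y=x_B+se\in A\setminus B$ with $s$ close to $\|x_A-x_B\|+r_A$. For each $\lambda$, the segment $[x,y]$ lies in $A$ by convexity and meets the plane ray in direction $u_\lambda$. The difficulty is that this intersection point may fall inside $B$. The first thing I would try is to work inside the two-dimensional plane spanned by $u$ and $e$ through $x_B$, which reduces the problem to a normed plane. There I would use $r_A<r_B$ as follows: the "outer" part of $\partial A$ seen from $x_B$ is a graph over directions, while $\partial B$ is a larger homothet of the same unit ball. I would then show that the radial function $b$ of $A$ from $x_B$ exceeds $r_B$ on a connected arc of directions containing $e$. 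This should come from a comparison of $b(u_\lambda)$ with the corresponding radial function of the homothetic ball $B$, exploiting that $A$ is a translate of a strictly smaller homothet of the same convex body. If this direct comparison fails, the fallback is a convexity argument: show that the set $\{v : x_B+v\in A,\ \|v\|>r_B\}$, together with its radial cone, meets every plane through $e$ in a connected set.
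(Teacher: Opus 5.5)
Your Step one, and your checks that $e\in U$ and $-e\notin U$, are correct. The gap is Step two. Star-shapedness of $U$ about $e$ is the whole content of the lemma, and you only announce it as the main obstacle and list strategies to try.

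The reduction to the plane $P=x_B+\operatorname{span}(u,e)$ is legitimate: since $x_A,x_B\in P$, the sets $A\cap P$ and $B\cap P$ are balls of radii $r_A$, $r_B$ for the induced norm. But what you then need is that $U\cap P$ is connected. By your own Step one fibration $(A\setminus B)\cap P\cong (U\cap P)\times(0,1)$, this is equivalent to the two-dimensional case of the statement you are proving. So the reduction does not make the problem easier.

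The ``comparison of $b(u_\lambda)$ with the radial function of $B$'' is also not carried out. Seen from $x_B$, that radial function is the constant $r_B$, so the comparison as stated just restates the goal. From $x_B$ the question is genuinely delicate, because a chord of $A$ joining two points of $A\setminus B$ can pass through $B$. The paper in fact goes the other way: it deduces the arc property of directions from $x_B$ in the plane (its Lemma 3.2) \emph{from} this lemma.

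The missing idea is to fibre radially from a different centre. Put $k=r_B/r_A>1$, and let $c=(kx_A-x_B)/(k-1)$ be the centre of the homothety $H(x)=c+k(x-c)$ with $H(\overline A)=B$.
\begin{itemize}
\item If $c\in\overline A$, convexity of $\overline A$ gives $\overline A\subset H(\overline A)=B$, so $A\setminus B=\emptyset$. Hence $c\notin\overline A$.
\item $H$ maps every ray from $c$ onto itself. If a ray $c+tv$ meets $A$ for $t\in(t_1,t_2)$, then $t_1>0$, and the ray meets $B$ exactly for $t\in[kt_1,kt_2]$.
\item So the ray meets $A\setminus B$ in the non-empty interval $(t_1,\min(t_2,kt_1))$.
\end{itemize}
Thus \emph{every} ray from $c$ through $A$ is admissible. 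The admissible directions are exactly the directions of the open convex cone over $A$ from $c$, which lies in an open half-space and is therefore contractible. Your Step one argument with $c$ in place of $x_B$ then finishes the proof; this is the paper's proof.

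Incidentally, applying this in the plane $P$ shows that $U\cap P$ is connected. Since it contains $u$ and $e$ and omits $-e$, it contains the whole arc $\{u_\lambda\}$, so your star-shapedness claim is true. But it comes out as a consequence of the homothety argument, not as a route around it.
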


\noindent\textit{Proof.} 

Since the balls $A$ and $B$ are defined in the same norm and have different radii, there exists a unique homothety $H(x)=c+\lambda (x-c)$ that maps the closure of ball $A$ to ball $B$: $H(\overline{A})=B$.

The coefficient of the homothety is equal to the ratio of their radii: $\lambda = \frac{r_B}{r_A}>1$. The center of the homothety is $c=\frac{\lambda x_A-x_B}{\lambda-1}$.

Let us show that $c\notin \overline{A}$. Assume the contrary. Since $\overline{A}$ is a convex set and $\lambda>1$, the image $H(\overline{A})$ under the homothety centered at point $c$ will strictly contain the initial set: $\overline{A}\subset H(\overline{A})$. But $H(\overline{A})=B$, hence $\overline{A}\subset B$. This contradicts the condition that the difference $A\setminus B$ is non-empty.

Let us introduce a Cartesian coordinate system in the space $\mathbb{R}^n$ with the origin at the point $c$. We direct the axis $Ox_1$ from the point $c$ towards the center of the small ball $x_A$. The remaining axes $Ox_2, \dots, Ox_n$ are chosen to be orthogonal to $Ox_1$ in an arbitrary manner. Any direction of a ray emanating from $c$ is determined by a unit vector $v \in S^{n-1}$.

Let $V \subset S^{n-1}$ be the set of "effective directions", that is, vectors $v$ for which the ray $R_v(t) = c + tv$ ($t > 0$) intersects the open ball $A$. The closure of this set, $\overline{V}$, corresponds to the rays intersecting the closure $\overline{A}$.

Since $c$ lies strictly outside the convex compact set $\overline{A}$, there exists a hyperplane strictly separating $c$ and $\overline{A}$. In the chosen coordinate system, the rays intersecting $\overline{A}$ cannot deviate from the $Ox_1$ axis by an angle of $\pi/2$ or more. Therefore, for all $v \in \overline{V}$, the first coordinate is strictly positive: $v_1 > 0$.

\begin{wrapfigure}{r}{0.55\textwidth}
\centering
\vspace{-10pt} 
\begin{tikzpicture}[scale=0.8]
    \coordinate (C) at (0,0);
    \coordinate (XA) at (2,0);
    \coordinate (XB) at (4,0);

    \draw[thick, blue] (XA) circle (1cm);
    \draw[thick, orange] (XB) circle (2cm);

    \draw[thick, dashed] (C) -- (7,0); 
    \draw[thick, dashed] (C) -- (30:7.5); 

    \fill (C) circle (1.5pt) node[below=2pt] {$c$};
    \fill (XA) circle (1.5pt) node[below=2pt] {$x_A$};
    \fill (XB) circle (1.5pt) node[below=2pt] {$x_B$};
    \node[blue] at (2.3, -0.6) {$A$};
    \node[orange] at (4.5, -1.2) {$B$};
\end{tikzpicture}
\caption{}
\label{fig:homothety}
\end{wrapfigure}

We project the unit hemisphere $\{v \in S^{n-1} : v_1 > 0\}$ onto the hyperplane $x_1 = 1$ from the origin. To each vector $v$ there uniquely corresponds a vector $u = (u_2, \dots, u_n) \in \mathbb{R}^{n-1}$ such that $v = \frac{(1, u)}{\|(1, u)\|}$. This correspondence is a homeomorphism.

The set of rays intersecting the convex compact set $\overline{A}$ forms a solid convex cone. The intersection of this cone with the hyperplane $x_1 = 1$ is a convex, closed, and bounded set $D$ in $\mathbb{R}^{n-1}$. Consequently, the parameter domain $D$, corresponding to $\overline{V}$, is homeomorphic to a closed disk in $\mathbb{R}^{n-1}$, and its interior $\operatorname{int} D$, corresponding to $V$ (the rays intersecting $A$), is homeomorphic to an open disk.

In what follows, we parameterize the directions by a vector $u \in \operatorname{int} D$. The corresponding ray will be denoted by $R_u(t)$.

For any parameter $u \in \operatorname{int} D$, the intersection of the ray $R_u(t)$ with the open set $A$ is an interval $(c + t_1(u) v(u), c + t_2(u) v(u))$. The functions $t_1(u)$ and $t_2(u)$ depend continuously on the parameter $u$ on the open disk $\operatorname{int} D$.

Since the ball $B$ is the image of the ball $A$ under the homothety $H$, the intersection of the exact same ray with $B$ is the segment $[c + \lambda t_1(u) v(u), c + \lambda t_2(u) v(u)]$.

Let us find the points of the ray that belong to the set $A \setminus B$. Since $\lambda > 1$ and $t_1(u) > 0$, the front boundary of the ball $B$ is always strictly further from the center $c$ than the front boundary of the ball $A$. Consequently, the intersection of $A \setminus B$ with a given ray is the open interval:
$$I_u = (c + t_1(u) v(u),\quad c + \min(t_2(u), \lambda t_1(u)) v(u)).$$

Let us construct a continuous mapping $F \colon (A \setminus B) \times [0, 1] \to A \setminus B$. Any point $x \in A \setminus B$ lies on a unique ray corresponding to some parameter $u_x \in \operatorname{int} D$. It belongs to the interval $I_{u_x}$; let $c + m(u_x) v(u_x)$ be the midpoint of this interval. The point $x$ corresponds to a parameter $\tau_x$ on the ray. We define a homotopy that contracts the interval to its midpoint:
$$F(x, s) = c + ((1 - s)\tau_x + s m(u_x)) v(u_x).$$

At $s = 1$, the entire set $A \setminus B$ is contracted to the set $M = \{c + m(u) v(u) : u \in \operatorname{int} D\}$. The set $M$ is homeomorphic to the open disk $\operatorname{int} D$ in $\mathbb{R}^{n-1}$ (since $u \mapsto c + m(u) v(u)$ defines a homeomorphism). Because an open disk is contractible to a point, $M$, and therefore $A \setminus B$, is also contractible to a point.

\qed 
\vspace{1em} 

\begin{defin}
We say that a normed space satisfies the \textbf{convex cone property} if for any closed ball $B=\{x\;:\;\|x-x_B\|\leq r_B\}$ and open ball $A=\{x\;:\;\|x-x_A\|< r_A\}$, with $r_A<r_B$, such that the difference $A \setminus B$ is non-empty, the cone of directions from the center of $B$ through the set $A \setminus B$ is convex.
\end{defin}

\begin{lem}
Any normed plane satisfies the convex cone property.
\end{lem}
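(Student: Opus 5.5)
The plan is to reduce the convex cone property to two facts: the set $A\setminus B$ is connected, and its directions seen from $x_B$ never contain an antipodal pair. In the plane these two facts together force convexity of the cone. After a translation and rescaling I may assume $x_B=0$ and $r_B=1$, so $B$ is the closed unit ball of the norm and $r_A<1$. Let $C=\{t y : t>0,\ y\in A\setminus B\}$ be the cone of directions in question. Let $\pi\colon \mathbb{R}^2\setminus\{0\}\to S$ be the radial projection $y\mapsto y/\|y\|$ onto the unit sphere $S=\partial B$, which is a closed convex curve homeomorphic to a circle. Then $C$ is the union of the open rays through the points of $\pi(A\setminus B)$. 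Since $0\in B$, the set $A\setminus B$ misses the origin, so $\pi$ is defined and continuous on it.

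\emph{Step 1 (connectedness).} By the previous lemma, $A\setminus B$ is contractible whenever it is non-empty, and in particular it is path-connected. Hence $W=\pi(A\setminus B)$ is a connected subset of $S\cong S^1$, that is, an arc: open, half-open or closed, possibly a single point. The whole circle is ruled out by the next step.

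\emph{Step 2 (no antipodal directions).} Suppose both $w$ and $-w$ belong to $W$. Then there are points $tw$ and $-sw$ in $A$ with $t,s>1$, because such points lie outside $B$. Their distance is $\|tw+sw\|=t+s>2$. On the other hand, the diameter of the open ball $A$ is at most $2r_A<2$, which is a contradiction. Thus $W\cap(-W)=\emptyset$.

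\emph{Step 3 (conclusion).} A connected arc $W\subset S$ containing no pair of antipodal points has angular size at most $\pi$, measured through the radial identification of $S$ with the Euclidean circle, which preserves antipodality. This is the step where a little care is needed. If $W$ had endpoints spanning more than a half-turn, then by continuity of the antipodal map the arc would contain both some point and its antipode. If $W$ spans exactly a half-turn, it must be open at one end, and it still contains no antipodal pair. The cone over such an arc is either an angular sector of opening $<\pi$ or an open half-plane with part of its boundary ray possibly attached. I expect the main obstacle to be checking that this last case, where the arc spans exactly $\pi$ with one endpoint included, still yields a convex set. In fact it does: the union of an open half-plane and one of its boundary rays (without the origin) is convex. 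Hence $C$ is convex, and the normed plane satisfies the convex cone property.
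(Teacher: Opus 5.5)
Your proof is correct and follows the paper's route. Path-connectedness of $A\setminus B$ from the previous lemma makes the direction set an arc, the diameter bound $2r_A<2r_B$ excludes antipodal directions, and together these force the cone to be convex. Your version is slightly more careful than the paper's in two places: you normalize directions by the norm (points of $\partial B$), so $t,s>1$ holds literally, and you explicitly handle the half-turn case, whereas the paper simply asserts that the arc has length strictly less than $\pi$.
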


\noindent\textit{Proof.}

Let us introduce a polar coordinate system centered at the point $x_B$. We choose the coordinate axes such that the polar angle $\varphi = 0$ corresponds to the direction from $x_B$ to the center of the small ball $x_A$. Any ray emanating from $x_B$ is uniquely determined by a direction vector $v(\varphi) = (\cos\varphi, \sin\varphi)$, where $\varphi \in (-\pi, \pi]$ is the polar angle.

Let $\Phi$ be the set of all angles $\varphi$ for which the corresponding ray $R_\varphi$ intersects the set $A \setminus B$. Since the center $x_A$ lies at the angle $\varphi = 0$ and the ball $A$ does not cover the origin, the directions intersecting $A$ (and thus $A \setminus B$) do not cross the ray $\varphi = \pi$. We will show that $\Phi$ is an interval of length strictly less than $\pi$.

First, by the previous lemma, the set $A \setminus B$ is contractible, and therefore path-connected. If we take any two points in $A \setminus B$ and connect them by a continuous curve $\gamma$, then the projection of this curve onto the unit circle of directions from the center $x_B$ will also be continuous. Consequently, the set of directions $\Phi$ is path-connected on the circle, meaning it forms an arc.

Second, let us show that this arc does not contain opposite directions (i.e., pairs of angles $\varphi$ and $\varphi + \pi$). Assume the contrary: there exist two rays from $x_B$ in opposite directions $v$ and $-v$, both of which intersect $A \setminus B$.
This means that on these rays there are points $y_1 = x_B + t_1 v$ and $y_2 = x_B - t_2 v$ belonging to $A \setminus B$.

Since $y_1, y_2 \notin B$, their distances to the center of $B$ are strictly greater than the radius $r_B$: $t_1 > r_B$ and $t_2 > r_B$.
The distance between these points is:
$|y_1 - y_2| = |(x_B + t_1 v) - (x_B - t_2 v)| = |(t_1 + t_2)v| = t_1 + t_2 > 2r_B$.

On the other hand, both points $y_1, y_2$ lie in the ball $A$ of radius $r_A$. By the triangle inequality, the distance between any two points of the ball $A$ does not exceed its double radius (diameter):
$|y_1 - y_2| \le |y_1 - x_A| + |x_A - y_2| < r_A + r_A = 2r_A$.

We obtain a contradiction: $2r_B < |y_1 - y_2| < 2r_A$, which implies $r_B < r_A$, contradicting the condition of the lemma ($r_A < r_B$).

Thus, the set of angles $\Phi$ is a connected arc containing no opposite directions. Hence, its length is strictly less than $\pi$, and it lies entirely in some interval of the form $(\varphi_0 - \alpha, \varphi_0 + \alpha)$, where $\alpha < \pi/2$. The cone $C$ formed by the rays with directions from such an interval is strictly convex by definition.

\qed 
\vspace{1em} 

\begin{lem}
In a finite-dimensional normed space, let there be a closed convex bounded set $A$ with a non-empty interior, and a convex cone $C=\{tv : v\in V\}$ with the apex at the origin, where the set of directions $V$ is a subset of the unit sphere $S^{n-1}$. Assume that the intersection $C\cap \operatorname{int} A$ is non-empty. Then the functions $t_{in}(v) = \inf\{t > 0 : tv\in \operatorname{int} A\}$ and $t_{out}(v) = \sup\{t > 0 : tv\in \operatorname{int} A\}$, which map a direction $v \in V$ to the entry and exit times of the cone's rays on $A$, are continuous with respect to the argument $v$ on the set $V$.
\end{lem}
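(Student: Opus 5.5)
The plan is to prove continuity of $t_{out}$ and $t_{in}$ by splitting each into upper and lower semicontinuity. One semicontinuity comes from the openness of $\operatorname{int} A$, the other from the closedness of $A$. Convexity is used once, to control what a single ray looks like inside $A$.

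Two preliminary remarks. First, the functions are only meaningful for directions whose ray actually meets $\operatorname{int} A$; otherwise the infimum and supremum are over the empty set. So I read the statement with $V$ replaced by $V_0=\{v\in V : \exists\, t>0,\ tv\in\operatorname{int}A\}$, which is non-empty by the hypothesis $C\cap\operatorname{int}A\neq\emptyset$. This is the situation used in the first lemma, where $V$ is exactly the set of rays meeting the open ball. Step 2 below will also show that $V_0$ is relatively open in $V$. Second, since $A$ is bounded, $t_{out}(v)\le \operatorname{diam}(A\cup\{0\})<\infty$, so both functions are finite on $V_0$.

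\emph{Step 1 (structure on one ray).} Fix $v\in V_0$. Since $\operatorname{int}A$ is open and convex, $\{t>0 : tv\in\operatorname{int}A\}$ is a non-empty open interval $(t_{in}(v),t_{out}(v))$; here $t_{in}(v)=0$ is allowed. Since $A$ is a closed convex set with non-empty interior, $A=\overline{\operatorname{int}A}$. Moreover, the segment joining a point of $A$ to a point of $\operatorname{int}A$ lies in $\operatorname{int}A$ except possibly at the first endpoint. Applying this along the ray gives
\[
\{t>0 : tv\in A\}\subseteq[t_{in}(v),t_{out}(v)].
\]
Indeed, if $sv\in A$ with $s>t_{out}(v)$, then the open segment from $sv$ to a point $t v\in\operatorname{int}A$ would lie in $\operatorname{int}A$. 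That would contradict the definition of $t_{out}(v)$, and the same argument handles $s<t_{in}(v)$.

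\emph{Step 2 (semicontinuity from openness).} Let $v_k\to v$ in $V$ and pick any $t\in(t_{in}(v),t_{out}(v))$. Then $tv\in\operatorname{int}A$, which is open, so $tv_k\in\operatorname{int}A$ for all large $k$. Hence $v_k\in V_0$ and $t_{in}(v_k)<t<t_{out}(v_k)$ for large $k$. Letting $t$ approach the endpoints gives
\[
\liminf_k t_{out}(v_k)\ge t_{out}(v), \qquad \limsup_k t_{in}(v_k)\le t_{in}(v).
\]

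\emph{Step 3 (semicontinuity from closedness).} Suppose $\limsup_k t_{out}(v_k)>t_{out}(v)$. Pass to a subsequence with $t_{out}(v_k)\to s>t_{out}(v)$; the limit $s$ is finite by boundedness. The points $t_{out}(v_k)v_k$ lie in $\overline{\operatorname{int}A}=A$, and $A$ is closed, so the limit $sv$ lies in $A$. This contradicts Step 1. The argument for $\liminf_k t_{in}(v_k)\ge t_{in}(v)$ is the same, using the points $t_{in}(v_k)v_k\in A$ with limit $sv$, $s<t_{in}(v)$; if $s=0$, the inequality $s\ge t_{in}(v)$ fails only when $t_{in}(v)>0$, and the limit point $0\cdot v$ is then handled by the same segment argument. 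Combining Steps 2 and 3 gives continuity of both functions on $V_0$.

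The main obstacle I expect is Step 1, that is, justifying that the ray cannot re-enter $A$ after $t_{out}(v)$ or touch $A$ before $t_{in}(v)$. This is where convexity and the non-empty interior are genuinely needed. I would prove it with the standard fact that for convex $A$, $x\in A$ and $y\in\operatorname{int}A$ imply $(1-\theta)x+\theta y\in\operatorname{int}A$ for $\theta\in(0,1]$: take a ball around $y$ inside $A$ and note that its homothetic image with centre $x$ and ratio $\theta$ is a ball around $(1-\theta)x+\theta y$ still contained in $A$. A secondary point to handle is the degenerate case $t_{in}(v)=0$, which occurs when $0\in A$; there Steps 2 and 3 still apply with the convention that the entry time is $0$.
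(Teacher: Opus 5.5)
Your proof is correct and follows essentially the same route as the paper. For each function, one semicontinuity comes from the openness of $\operatorname{int} A$, and the other from the closedness of $A$ combined with the convexity of the ray's intersection with $A$. The only difference is that you argue with sequences and the no-re-entry fact of Step 1, where the paper uses $\varepsilon$-neighbourhoods and the open complement $\mathbb{R}^n\setminus A$.

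Your restriction to $V_0$ and the segment argument of Step 1 also make explicit two points the paper passes over. The paper tacitly assumes every ray of $V$ meets $\operatorname{int} A$. It also does not check that the entry and exit times defined via $A$ (as in its proof) agree with those defined via $\operatorname{int} A$ (as in the statement).
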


\noindent\textit{Proof.}
Let us denote the functions of the entry and exit points for a direction $v \in V$ as:

$$t_{in}(v)=\inf\{t>0\;|\;tv\in A\},\quad t_{out}(v)=\sup\{t>0\;|\;tv\in A\}.$$

Since the set $A$ is closed, bounded, and convex, the intersection of any ray $tv$ with the set $A$ is a segment $[t_{in}(v), t_{out}(v)]$. By the condition of the lemma, the considered rays intersect $\operatorname{int} A$, hence $t_{in}(v) < t_{out}(v)$, and for any $t \in (t_{in}(v), t_{out}(v))$ the point $tv$ lies strictly inside $A$ (in $\operatorname{int} A$).

We will prove the continuity of the function $t_{out}(v)$ at an arbitrary point $v_0 \in V$. To do this, we will show that for any $\varepsilon > 0$, a sufficiently small change in the direction $v$ results in a change of $t_{out}(v)$ by less than $\varepsilon$. Let us denote $t_0 = t_{out}(v_0)$.

Lower bound. Without loss of generality, we may assume that $\varepsilon > 0$ is small enough so that $t_0 - \varepsilon > t_{in}(v_0)$ (if the continuity holds for small $\varepsilon$, it holds for any $\varepsilon$). Consider the point $x_- = (t_0 - \varepsilon) v_0$. Since it lies on the ray strictly between the entry and exit points, $x_- \in \operatorname{int} A$. Because $\operatorname{int} A$ is an open set, there exists a neighborhood of the direction $v_0$ on the sphere of directions such that the point $(t_0 - \varepsilon)v$ still remains inside $\operatorname{int} A$. Consequently, for all $v$ from this neighborhood, the ray exits $A$ no earlier than this time: $t_{out}(v) \geqslant t_0 - \varepsilon$.

Upper bound. Consider the point $x_+ = (t_0 + \varepsilon) v_0$. By the definition of the exit point, this point no longer belongs to $A$. Since the set $A$ is closed, its complement $\mathbb{R}^n \setminus A$ is an open set. Consequently, there exists a (possibly different) neighborhood of the direction $v_0$ for which the point $(t_0 + \varepsilon)v$ still lies in the open complement to $A$. By the convexity of $A$, the intersection of the ray with it is a connected segment. Since the point $(t_0 + \varepsilon)v$ lies strictly outside $A$, the ray must leave the set earlier: $t_{out}(v) \leqslant t_0 + \varepsilon$.

The intersection of these two neighborhoods defines a neighborhood of $v_0$ in which the following holds:

$$t_0-\varepsilon\leq t_{out}(v)\leq t_0+\varepsilon.$$

This means that the function $t_{out}(v)$ is continuous at the point $v_0$.

The continuity of the entry function $t_{in}(v)$ is proved absolutely similarly: the point $(t_{in}(v_0) + \varepsilon) v_0$ lies in the open set $\operatorname{int} A$, which gives $t_{in}(v) \leqslant t_{in}(v_0) + \varepsilon$, while the point $(t_{in}(v_0) - \varepsilon) v_0$ lies in the open complement $\mathbb{R}^n \setminus A$, which gives $t_{in}(v) \geqslant t_{in}(v_0) - \varepsilon$.
\qed 
\vspace{1em} 

\begin{theorem}
In a normed space that satisfies the convex cone property, let there be a closed ball $B=\{x\;:\;\|x-x_B\|\leq R\}$ and a set $A=\bigcap^n_{i=1}A_i$, where $A_i=\{x\;:\;\|x-x_{A_i}\|< r_i\}$ with $r_i<R$, which is an intersection of open balls. If the set $A \setminus B$ is non-empty, then it is contractible.
\end{theorem}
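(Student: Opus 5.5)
We will use radial coordinates centred at $x_B$, taking $x_B$ as the origin. The guiding idea is that, along each ray from $x_B$, the set $A\setminus B$ should be an interval, and that these intervals vary continuously with the direction. A fibrewise straight-line contraction then reduces the problem to contracting a set of directions, and the convex cone property is what makes that set contractible.

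First, for each $i$ the set $A_i\setminus B$ is contained in $A\setminus B$. It is therefore non-empty, and the convex cone property applies to the pair $A_i$, $B$: the cone $C_i$ of directions from $x_B$ through $A_i\setminus B$ is convex. Moreover, the argument of the second lemma (no two opposite rays can both meet $A_i\setminus B$, since $r_i<R$) shows that $C_i$ is pointed and lies in an open half-space. Let $V$ be the set of directions $v$ whose ray $\{tv: t>0\}$ meets $A\setminus B$. Then $V\subset \bigcap_i C_i$.

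Second, we describe a single ray. Fix $v\in V$. The ray meets the convex open set $A$ in an open interval $(t_{in}(v),t_{out}(v))$, and it meets $B$ in $\{t\le \rho(v)\}$, where $\rho(v)=R/\|v\|$ because $B$ is centred at the origin. Hence the ray meets $A\setminus B$ in
\[
J_v=\bigl(\max(t_{in}(v),\rho(v)),\,t_{out}(v)\bigr),
\]
which is a single non-empty open interval. Here the radial structure of $B$ helps: $B$ is star-shaped about its own centre, so each ray is cut exactly once. It follows that $V$ is exactly the set of directions with $\max(t_{in},\rho)<t_{out}$. By the third lemma (applied to $\overline A$, a closed convex bounded set with non-empty interior), $t_{in}$ and $t_{out}$ are continuous, and $\rho$ is continuous. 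Therefore $V$ is open in the sphere, and the lower and upper endpoints of $J_v$, together with the midpoint $m(v)$ of $J_v$, depend continuously on $v$.

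Third, we contract along rays. Define
\[
F(x,s)=\bigl((1-s)\|x\|_2+s\,m(x/\|x\|_2)\bigr)\,\frac{x}{\|x\|_2},
\]
as in the first lemma. This is a continuous deformation of $A\setminus B$ into itself, because each $J_v$ is convex, and it ends on the section $M=\{m(v)v: v\in V\}$, which is homeomorphic to $V$. So it remains to show that $V$ is contractible.

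This last step is the main obstacle. The plan is to prove that $V$ equals the open set $\{v: \max(t_{in}(v),\rho(v))<t_{out}(v)\}$ and that it is geodesically convex, namely the trace on the sphere of a convex cone lying in an open half-space. Centrally projecting that cone onto an affine hyperplane would then make $V$ a convex open set, hence contractible.

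The intended route to convexity is as follows. A direction lies in $V$ exactly when some point $tv$ with $t>\rho(v)$ lies in every $A_i$. To pass from two directions $v,w\in V$ to a direction between them, consider the two-dimensional plane spanned by $v$ and $w$ through $x_B$. Restricted to that plane, each $A_i$ and $B$ become planar balls of a (restricted) norm, so the planar situation of the second lemma governs them. The aim is to show that the region inside all $A_i$ and outside $B$ is connected in that plane, which would mean its angular projection is an arc containing every direction between $v$ and $w$. If this direct argument fails, the fallback is to use that $\bigcap_i C_i$ is convex and to show that, for $v$ in it, the intervals $J^i_v$ along the ray have a common point. This would follow from a Helly-type argument on the line for intervals $J^i_v$ all bounded below by $\rho(v)$ and inside the convex sets $A_i$, together with an argument that the intervals pairwise intersect. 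The pairwise intersection is the step expected to need the most care.
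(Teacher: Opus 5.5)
Your reduction is correct and matches the paper's. On each ray from $x_B$ the set $A\setminus B$ is the interval $J_v$, its endpoints are continuous by the third lemma, and contracting each $J_v$ to its midpoint gives $A\setminus B\simeq V$. The gap is the step that carries the content of the theorem: you leave contractibility of $V$ unproved, and neither route you sketch can work.

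\emph{The planar route.} It rests on a false premise. When $x_{A_i}\notin P$, the set $A_i\cap P$ is an off-centre section of a ball and in general is not a ball of the restricted norm. For the sup-norm, sections of the cube by planes $x_1+x_2+x_3=c$ with $0<|c|<1$ are hexagons that are not even centrally symmetric. Moreover, connectedness of $(A\setminus B)\cap P$ for several balls is essentially the planar case of the theorem itself. The second lemma, which treats a single ball, does not supply it.

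\emph{The fallback.} It aims at a false statement: $v\in\bigcap_i C_i$ does not force the intervals $J^i_v$ to meet. In the Euclidean plane, let $B$ have radius $10$ and centre $0$, and let $A_1,A_2$ have radius $5$ and centres $(20,0)$ and $(28,0)$. Then $(24,0)\in A\setminus B$. Yet the ray $y=0.15x$ meets $A_1\setminus B$ only for $x\in(15.58,23.54)$ and $A_2\setminus B$ only for $x\in(24.63,30.14)$ (approximately). So its direction lies in $C_1\cap C_2$ but not in $V$. Hence $\bigcap_i C_i\supsetneq V$ in general, and no Helly-type pairwise-intersection argument can close the gap.

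The missing observation is one line from what you already wrote. Let $C_A=\{\lambda a:\lambda>0,\ a\in A\}$ be the cone over $A$; it is convex because $A$ is. Since $t_{out}=\min_i t_{out,i}$,
\[
\max\bigl(t_{in}(v),\rho(v)\bigr)<t_{out}(v)\iff t_{in}(v)<t_{out}(v)\ \text{ and }\ \rho(v)<t_{out,i}(v)\ \text{ for all } i .
\]
The first condition means $v\in C_A$. Given that (so the ray meets each $A_i$), the second means $v\in C_i$ for every $i$. Hence $V=C_A\cap\bigcap_i C_i$ is an intersection of convex cones lying in an open half-space. Its central projection to an affine hyperplane is an open convex set, so $V$ is contractible.

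This is exactly the paper's argument. The convex cone property is applied to each ball $A_i$ separately, and the extra cone $C_A$, coming from convexity of $A$, is what excludes directions like the one in the example above.
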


\noindent\textit{Proof.}

Let us place the origin at the center of the large ball $x_B = 0$. Then $B = \{x : \|x\| \le R\}$.
For any direction $v$, let $R_v(t) = tv$ ($t > 0$) denote the ray emanating from the origin. For each open ball $A_i$, the ray intersects it along the interval $(t_{in, i}(v), t_{out, i}(v))$.

By the condition of the theorem, the space satisfies the convex cone property. This means that for each ball $A_i$, the set of directions whose rays intersect $A_i \setminus B$ forms a strictly convex cone. Let us denote it by $C_i$. A ray belongs to $C_i$ if and only if it intersects $A_i$ and the furthest point of intersection of the ray with the closure of $A_i$ lies strictly outside $B$, that is, $t_{out, i}(v) > R$.

The set $A = \bigcap_{i=1}^n A_i$ is open and convex as an intersection of open balls. Let $C_A$ denote the cone of directions of the rays intersecting $A$. By the convexity of $A$, the cone $C_A$ is also convex. For any direction $v \in C_A$, the ray intersects $A$ along the interval $(t_{in}(v), t_{out}(v))$, where:

$$t_{in}(v)=\max_i t_{in,i}(v),\quad t_{out}(v)=\min_i t_{out,i}(v).$$

Let us determine which directions constitute the final cone $C$, corresponding to the rays intersecting $A \setminus B$. A ray intersects $A \setminus B$ if and only if it intersects $A$ ($v \in C_A$) and the furthest point of the closure of $A$ on this ray lies strictly outside $B$:

$$t_{out}(v)>R\Longleftrightarrow \min_i t_{out,i}(v)>R\Longleftrightarrow \forall i\;\;t_{out,i}(v)>R$$

\begin{wrapfigure}{r}{0.55\textwidth}
\centering
\vspace{-10pt}
\begin{tikzpicture}[scale=0.8]
    \coordinate (Origin) at (0,0); 
    \def\RB{3} 
    
    \coordinate (A1) at (2.8, 1.2);
    \def\Rone{2}
    
    \coordinate (A2) at (1.2, 2.8);
    \def\Rtwo{2}

    \begin{scope}
        \clip (A1) circle (\Rone);
        \clip (A2) circle (\Rtwo);
        \fill[red!20, even odd rule] (Origin) circle (\RB) (-5,-5) rectangle (6,6);
    \end{scope}

    \draw[thick, orange] (Origin) circle (\RB);
    \draw[thick, blue] (A1) circle (\Rone);
    \draw[thick, green!70!black] (A2) circle (\Rtwo);

    \draw[thick, dotted, gray] (Origin) -- (28.18:5.5);
    \draw[thick, dotted, gray] (Origin) -- (61.82:5.5);

    \fill (28.18:\RB) circle (2pt);
    \fill (61.82:\RB) circle (2pt);

    \draw[ultra thick, purple] plot [smooth] coordinates {
        (28.18:3)       
        (35:3.38)       
        (45:3.738)      
        (55:3.38)       
        (61.82:3)       
    };
    
    \node[purple, font=\Large\bfseries] at (52:4.2) {$S_{mid}$};

    \coordinate (T1) at (45:3);       
    \coordinate (T2) at (45:4.477);   
    \coordinate (M) at (45:3.738);    
    \coordinate (X) at (45:3.2);      
    
    \draw[thick, dashed] (Origin) -- (45:5.5);
    
    \draw[line width=3.5pt, red, opacity=0.4] (T1) -- (T2);

    \fill (Origin) circle (2.5pt) node[below left] {$x_B=0$};
    \fill (T1) circle (2pt) node[below right] {$t_1(v)v$};
    \fill (T2) circle (2pt) node[above right] {$t_2(v)v$};
    \fill[purple] (M) circle (2.5pt) node[above left, text=purple] {$m(v_x)\cdot v_x$};
    \fill[red] (X) circle (2pt) node[below right=1pt, text=red] {$x$};

    \draw[->, thick, shorten >=2pt, shorten <=2pt] (X) -- (M);

    \node[orange, font=\Large\bfseries] at (-1.8, 2) {$B$};
    \node[blue, font=\Large\bfseries] at (4.2, 0.55) {$A_1$};
    \node[green!70!black, font=\Large\bfseries] at (0.7, 4.3) {$A_2$};

\end{tikzpicture}
\caption{}
\label{fig:homothety2}
\end{wrapfigure}

Consequently, a ray belongs to the cone $C$ if and only if it belongs to $C_A$ and to all cones $C_i$. That is, $C = C_A \cap \left( \bigcap_{i=1}^n C_i \right)$.
Since the intersection of any family of strictly convex cones is a strictly convex cone, the final cone $C$ is strictly convex.

Since the set $A \setminus B$ is non-empty, the cone $C$ is non-empty. Let us choose an arbitrary ray from $C$ and direct the abscissa axis $Ox_1$ of our Cartesian coordinate system along it. The remaining axes are chosen orthogonally.

Since $C$ is a strictly convex cone, it lies entirely in a single half-space $x_1 > 0$. We project the directions of the cone $C$ onto the hyperplane $x_1 = 1$. The intersection of the cone $C$ with this hyperplane is an open, convex, bounded set, which we denote by $\operatorname{int} D \subset \mathbb{R}^{n-1}$.
The set $\operatorname{int} D$ is homeomorphic to an open disk. To each point $u \in \operatorname{int} D$, there uniquely corresponds a unit direction vector $v(u)$.

For any parameter $u \in \operatorname{int} D$, the ray $R_u(t)$ intersects $A \setminus B$ along the open interval:

$$I_u=(t_1(u)v(u), t_2(u)v(u)),$$

where $t_1(u) = \max(R, t_{in}(u))$ is the entry point into $A \setminus B$, and $t_2(u) = t_{out}(u)$ is the exit point.

By the previous lemma (applied to the convex set $A$), the functions $t_{in}(u)$ and $t_{out}(u)$ are continuous on $\operatorname{int} D$. Hence, the functions $t_1(u)$ and $t_2(u)$ are also continuous.

Let us define the continuous function of the interval's midpoint: 
$$m(u)=\frac{t_1(u)+t_2(u)}{2}.$$

Any point $x \in A \setminus B$ lies on a unique ray corresponding to the parameter $u_x \in \operatorname{int} D$, and is at a distance $\tau_x = \|x\|$ from the origin.
Let us construct a continuous homotopy that contracts each interval to its midpoint:

$$H(x,s)=((1-s)\tau_x+s\cdot m(u_x))v(u_x).$$

At $s = 1$, the entire set $A \setminus B$ contracts to the "mid-surface" $S_{mid} = \{m(u) v(u) : u \in \operatorname{int} D\}$.
The mapping $u \mapsto m(u) v(u)$ defines a homeomorphism between the open convex set $\operatorname{int} D$ (which is homeomorphic to a disk) and the surface $S_{mid}$. Since an open disk is contractible to a point, $S_{mid}$, and consequently the entire set $A \setminus B$, is also contractible to a point.

\qed
\vspace{1em}

\begin{theorem}
Let a finite-dimensional normed space $X$ be equipped with a cylindrical norm $\|(x,y,z_1,\ldots,z_k)\|=\max\{\|(x,y)\|_{X_2},|z_1|,\ldots,|z_k|\}$, where $\|\cdot\|_{X_2}$ is a norm in some normed plane. Then the statement of the previous theorem holds for this norm.
\end{theorem}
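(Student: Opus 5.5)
The plan is to avoid checking the convex cone property for the cylindrical norm, which may fail in general. Instead I would use the product structure of the balls, apply the planar case to the first factor, and handle the cube factor directly.

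\textbf{Splitting the balls.} Write points as $(p,z)$ with $p\in\mathbb{R}^2$ and $z\in\mathbb{R}^k$. For the norm $\max\{\|p\|_{X_2},|z_1|,\ldots,|z_k|\}$, every open ball $A_i$ is a product $D_i\times Q_i$. Here $D_i$ is an open planar ball of radius $r_i$ and $Q_i$ is an open cube of half-side $r_i$. Likewise $B=D_B\times Q_B$, with $D_B$ a closed planar ball and $Q_B$ a closed cube, both of radius $R$. Hence $A=D\times Q$, where $D=\bigcap_i D_i$ is open and convex and $Q=\bigcap_i Q_i=\prod_j I_j$ is an open box. Every interval $I_j$ has length less than $2R$. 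Since $(p,z)\notin B$ if and only if $p\notin D_B$ or $z\notin Q_B$, we get
$$A\setminus B = X_1\cup X_2,\qquad X_1=(D\setminus D_B)\times Q,\quad X_2=D\times(Q\setminus Q_B),$$
with $X_1\cap X_2=(D\setminus D_B)\times(Q\setminus Q_B)$. Both $X_1$ and $X_2$ are open, being differences of an open set and a closed set.

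\textbf{Contractibility of the factors.} By the lemma that every normed plane has the convex cone property, together with the previous theorem applied in $X_2$, the set $D\setminus D_B$ is contractible whenever it is non-empty. For $Q\setminus Q_B$ I would argue directly. Let $K=Q\cap Q_B=\prod_j (I_j\cap[-R,R])$. Because $|I_j|<2R$, each $I_j\cap[-R,R]$ is empty, all of $I_j$, or a piece of $I_j$ containing exactly one end of $I_j$. If some factor is empty, then $Q\setminus Q_B=Q$ is convex. Otherwise, pick a point $q\in Q$ that, in every coordinate $j$ where $K$ is a proper piece, lies beyond that piece on the opposite end of $I_j$. Then $Q\setminus K$ is star-shaped with respect to $q$. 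Indeed, if $x\notin K$, some coordinate $x_j$ lies outside $I_j\cap[-R,R]$. Along the segment from $x$ to $q$ this coordinate moves monotonically towards $q_j$, which is also outside that piece and on the same side. Hence the whole segment avoids $K$. So $Q\setminus Q_B$ is contractible when non-empty.

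\textbf{Gluing.} If $D\setminus D_B=\emptyset$, then $A\setminus B=X_2$, a product of contractible sets. Symmetrically, if $Q\setminus Q_B=\emptyset$, then $A\setminus B=X_1$. Otherwise $X_1$, $X_2$ and $X_1\cap X_2$ are all non-empty products of contractible sets, hence contractible. The union of two open contractible sets with contractible intersection is contractible. One way to see this is the Nerve theorem cited in the introduction, since the two-element cover $\{X_1,X_2\}$ is good and its nerve is a segment. Alternatively, van Kampen and Mayer--Vietoris show the union is simply connected and acyclic, and Whitehead's theorem applies because open subsets of $\mathbb{R}^n$ have the homotopy type of CW complexes.

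\textbf{Main obstacle.} I expect the delicate step to be the star-shapedness of $Q\setminus Q_B$. It depends essentially on $r_i<R$, through the condition $|I_j|<2R$ that keeps $K$ pinned to one end of $I_j$ in each coordinate. Without this condition $K$ could sit in the middle of $Q$ and create a hole.
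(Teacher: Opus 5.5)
Your proof is correct, and its skeleton is the paper's. You use the same splitting $A\setminus B=\bigl((D\setminus D_B)\times Q\bigr)\cup\bigl(D\times(Q\setminus Q_B)\bigr)$, the planar theorem for $D\setminus D_B$, and the lemma that a union of two contractible sets with contractible intersection is contractible.

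The difference lies in how the $z$-factor is handled. The paper treats only $k=1$. It splits $D_A\times(I_A\setminus I_B)$ into two slabs $V_{top}$ and $V_{bot}$ and glues twice. In step 3 it asserts that $U\cap V_{top}\neq\emptyset$ whenever $V_{top}\neq\emptyset$, which silently requires $D_A\setminus D_B\neq\emptyset$. When that set is empty, the paper would additionally need that $I_{top}$ and $I_{bot}$ cannot both be non-empty, and it never says so; it even allows both.

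Your star-shapedness of $Q\setminus Q_B$ uses $|I_j|<2R$ exactly where it matters. It settles the degenerate case directly, since then $A\setminus B=X_2$. It also makes the extension to $k\geq 2$ a real proof. For $k\geq 2$, $Q\setminus Q_B$ is no longer a union of two slabs, so the paper's ``completely analogous'' would need an argument like yours.

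You also justify the gluing lemma through openness of $X_1$ and $X_2$, via the Nerve theorem or Whitehead. The paper simply cites the lemma.
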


\noindent\textit{Proof.}

For the sake of simplicity, we will present the proof for a three-dimensional space; for higher dimensions, the proof is completely analogous. Balls in the cylindrical norm have the form $D \times I$, where $D$ is a ball in the $X_2$ norm, and $I$ is a segment along the $z$-axis. 

Let 
$$B=D_B\times I_B,\quad A=\bigcap A_i=\bigcap (D_i\times I_i)=\bigcap D_i\times \bigcap I_i=D_A\times I_A$$

Since $I_A$ is an open interval and $I_B$ is a closed segment, their difference $I_A \setminus I_B$ consists of at most two disjoint open intervals, say $I_{top}$ and $I_{bot}$ (one or both may be empty). Consequently, the difference $A \setminus B$ can be represented as a union of sets:
$$
A \setminus B = (D_A\times I_A)\setminus (D_B\times I_B)=
((D_A\setminus D_B)\times I_A)\cup (D_A\times I_{top}) \cup (D_A\times I_{bot}).
$$

Let us denote $U = (D_A\setminus D_B)\times I_A$, and $V_{top} = D_A\times I_{top}$, $V_{bot} = D_A\times I_{bot}$. 

1. The set $U$ is contractible because it is a Cartesian product of contractible sets: $D_A\setminus D_B$ (which is contractible by the theorem for the plane, assuming it is non-empty) and the interval $I_A$. 
2. The sets $V_{top}$ and $V_{bot}$ are contractible since $D_A$ is convex, and $I_{top}$, $I_{bot}$ are intervals. Note that $V_{top}$ and $V_{bot}$ are disjoint.
3. Consider the intersection $U \cap V_{top} = (D_A\setminus D_B)\times I_{top}$. If $V_{top}$ is non-empty, this intersection is also non-empty and contractible as a product of contractible sets. Therefore, by the theorem on the union of two contractible sets with a contractible intersection (\cite{Fomenko}), the union $U \cup V_{top}$ is contractible.
4. Now consider the intersection of $(U \cup V_{top})$ with $V_{bot}$. Since $V_{top}$ and $V_{bot}$ are disjoint, this intersection is exactly $U \cap V_{bot} = (D_A\setminus D_B)\times I_{bot}$. Again, if $V_{bot}$ is non-empty, this intersection is non-empty and contractible. Applying the same theorem, we conclude that $(U \cup V_{top}) \cup V_{bot} = A \setminus B$ is contractible.

\qed

\label{end}

\end{document}